\setlist[enumerate,1]{label=(\arabic*),font=\textup,
leftmargin=7mm,labelsep=1.5mm,topsep=0mm,itemsep=-0.8mm}
\setlist[enumerate,2]{label=(\alph*),font=\textup,
leftmargin=7mm,labelsep=1.5mm,topsep=-0.8mm,itemsep=-0.8mm}
\newtheorem{theorem}{Theorem}[section]
\newtheorem{lemma}{Lemma}[section]
\theoremstyle{definition}
\numberwithin{equation}{section}
\begin{document}
	\begin{frontmatter}
		\title{Turán problems for linear forests and cliques \,\tnoteref{titlenote}}
		\tnotetext[titlenote]{This work was supported by the National Nature Science Foundation of China (Nos.11871040, 12271337)}
		
		\author{Tao Fang}
		\ead{tao2021@shu.edu.cn}
		
		\author{Xiying Yuan\corref{correspondingauthor}}
        \cortext[correspondingauthor]{Corresponding author. Email address: {\tt xiyingyuan@shu.edu.cn} (Xiying Yuan)}

		\address{Department of Mathematics, Shanghai University, Shanghai 200444, P.R. China}
				
		\begin{abstract}
		  Given a graph $T$ and a family of graphs $\mathcal{H}$. The generalized Turán number of $\mathcal{H}$ is the maximum number of copies of $T$ in an $\mathcal{H}$-free graph on $n$ vertices, denoted by $ex(n, T, \mathcal{H})$. Let $ex(n, T, \mathcal{H})$ denote the maximum number of
copies of $T$ in an $n$-vertex $\mathcal{H}$-free graph. Recently, Alon and Frankl (arXiv2210.15076) determined the exact values of $\rm{ex}(n, \{K_{r+1}, M_{s+1}\})$, where $K_{r+1}$ and $M_{s+1}$ are complete graph on $r + 1$ vertices and matching of size $s + 1$, respectively.  Ma and Hou (arXiv2301.05625) gave the generalized version of Alon and Frankl's Theorem, which determine the exact values of $ex(n, K_r, \{K_{k+1}, M_{s+1}\})$. Zhang determined the exact values of $ex(n, K_r, \mathcal{L}_{n, s})$, where $\mathcal{L}_{n, s}$ be the family of all linear forests of order $n$ with $s$ edges. Inspired by the work of Zhang and Ma, in this paper, we determined the exact number of $ex(n, \{K_{r+1}, \mathcal{L}_{n, s}\})$.
		\end{abstract}
		
		\begin{keyword}
			Turán number\sep
			Linear forest\sep
            Clique
			\MSC[2010]
			05C05
            05C35
		\end{keyword}
	\end{frontmatter}
	
\section{Introduction}

Given a fixed graph $T$ and a family of graphs $\mathcal{H}$. A graph $G$ is called $\mathcal{H}$-free if for any $H\in \mathcal{H}$, $G$ contains no copy of $H$ as subgraph. Write $N(G, T)$ for the number of copies of $T$ in $G$. The generalized Turán number of $\mathcal{H}$ is the maximum number of copies of $T$ in an $\mathcal{H}$-free graph on $n$ vertices and it can also be expressed by the following formula.
\begin{equation*}
  \rm{ex}(n, T, \mathcal{H})=\mbox{max $\{N(G, T): G$ is an $n$-vertex $\mathcal{H}$-free graph\}}.
\end{equation*}
For $T=K_2$, it reduces to the classical Turán number, that is $ex(n, \mathcal{H})=ex(n, K_2, \mathcal{H})$.

A matching in a graph $G$ is a set of non-loop edges with no shared endpoints. Denote by $M_k$ a matching containing $k$ edges. We denote by $\nu(G)$ the number of edges in a maximum matching of $G$. A linear forest is graph consisting of vertex disjoint paths or isolated vertices. Denote by $\mathcal{L}_{n, s}$ the set of all linear forests of order $n$ with exactly $s$ edges. Let $K_r$ denote a complete graph on $r$ vertices. Denote by $E_k$ a set of $k$ vertices in the graph that are not adjacent to each other. The join of two disjoint graphs $H_1$ and $H_2$, denoted by $H_1\vee H_2$, is the graph whose vertex set is $V(H_1\vee H_2)=V(H_1)\cup V(H_2)$ and edge set is $E(H_1\vee H_2)=E(H_1)\cup E(H_2)\cup \{xy: x\in V(H_1), y\in V(H_2)\}$.

There are many results about Turán number and generalized Turán number.
For Turán number, Erdös and Gallai \cite{Erdos1959} determined the Turán number of $M_{s+1}$. In \cite{Ning2020} , Ning and Wang used the closure technique and the counting technique to determine the Turán number of $\mathcal{L}_{n, s}$. Recently, Alon and Frankl \cite{Alon2022} use Tutte-Berge Theorem to determine $ex(n, \{K_{r+1}, M_{s+1}\})$.
\begin{theorem}[\cite{Alon2022}]\label{Alon2022}
  For $n\geqslant 2s+1$ and $r\geqslant 2$,
  \begin{equation*}
    ex(n, \{K_{r+1}, M_{s+1}\})=\mbox{\rm{max}}\left\{t(2s+1, r), t(s, r-1)+(n-s)s \right\}.
  \end{equation*}
\end{theorem}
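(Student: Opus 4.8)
The plan is to prove the matching lower and upper bounds separately. For the lower bound I would exhibit two extremal graphs, one for each quantity. The first is $T(2s+1,r)$, the Turán graph on $2s+1$ vertices, together with $n-2s-1$ isolated vertices: it is $K_{r+1}$-free, every edge lies inside the $(2s+1)$-set so $\nu\le s$, and it has exactly $t(2s+1,r)$ edges. The second is $T(s,r-1)\vee E_{n-s}$, the join of the $K_r$-free Turán graph on $s$ vertices with an independent set of size $n-s$: every edge meets the $s$-set so $\nu\le s$, a largest clique consists of an $(r-1)$-clique of $T(s,r-1)$ together with one vertex of $E_{n-s}$ so $G$ is $K_{r+1}$-free, and its edge count is $t(s,r-1)+(n-s)s$. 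Taking whichever graph has more edges yields the lower bound.

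For the upper bound, let $G$ be an extremal $\{K_{r+1},M_{s+1}\}$-free graph. Since $\nu(G)\le s$, I would apply the Tutte--Berge formula to obtain a maximizing set $U\subseteq V(G)$ for which $o(G-U)-|U|=n-2\nu(G)\ge n-2s$, where $o(\cdot)$ counts odd components. Writing $u=|U|$ and letting $Q_1,\dots,Q_p$ be the components of $G-U$, the estimate $n-u=\sum_i|Q_i|\ge o(G-U)\ge u+n-2s$ forces $u\le s$ and, more precisely, bounds the total excess $\sum_i(|Q_i|-1)=(n-u)-p$ by $2(s-u)$, so only boundedly many (in terms of $s-u$) vertices outside $U$ lie in non-trivial components. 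A useful first reduction is that, because $U$ is a Tutte--Berge maximizer, the identity $\nu(G)=\tfrac12(n-o(G-U)+|U|)$ holds; replacing each $K_{r+1}$-free component $Q_i$ by the Turán graph $T(|Q_i|,r)$ changes neither $o(G-U)$ nor $|U|$, only increases the edge count, and the Tutte--Berge inequality then caps $\nu$ by the same value, so $\nu$ is unchanged. Hence I may assume every component is a Turán graph and bound $e(G)=e(G[U])+e(U,V\setminus U)+\sum_i t(|Q_i|,r)$.

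The heart of the proof, and what I expect to be the main obstacle, is the interaction between the global matching constraint and the local clique constraint across the boundary between $U$ and the components. A naive estimate $e(G[U])\le t(u,r)$ combined with $e(U,V\setminus U)\le u(n-u)$ overcounts: as soon as a vertex $v$ of some component is joined to all of $U$, the graph on $U\cup\{v\}$ must remain $K_{r+1}$-free, which forces $G[U]$ to be $K_r$-free and hence $e(G[U])\le t(u,r-1)$. This drop from $t(u,r)$ to $t(u,r-1)$ is precisely what separates the (incorrect) value $t(s,r)+s(n-s)$ from the correct $t(s,r-1)+(n-s)s$, so the argument must track the clique number of $G[U]$ against the cliques of the components joined to $U$ rather than apply the crude Turán bound to $U$. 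To finish I would combine $\sum_i t(|Q_i|,r)$ with the excess estimate $\sum_i(|Q_i|-1)\le 2(s-u)$ and the convexity of $m\mapsto t(m,r)$ to show that for each fixed $u$ it is never worse either to shrink every component to a single vertex, driving the configuration toward $T(s,r-1)\vee E_{n-s}$, or to collapse all the excess into one odd component, driving it toward $T(2s+1,r)$. Treating $u\in\{0,1,\dots,s\}$ as the remaining free parameter, the resulting bound is extremal at an endpoint, and comparing the two endpoint values $t(2s+1,r)$ and $t(s,r-1)+(n-s)s$ reproduces exactly the claimed maximum.
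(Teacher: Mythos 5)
First, a point of reference: the paper you are working from does not actually prove this statement --- it is Theorem~\ref{Alon2022}, quoted from Alon and Frankl, and the paper only records that the original proof uses the Tutte--Berge theorem. Your proposal follows that same route, and much of it is sound: the two extremal constructions for the lower bound are correct, and the Tutte--Berge setup (the maximizing set $U$, the deduction $u\le s$, the excess bound $\sum_i(|Q_i|-1)\le 2(s-u)$, and the convexity step reducing $\sum_i t(|Q_i|,r)$ to $t(2(s-u)+1,r)$) is all fine. (Two harmless imprecisions: replacing components by Tur\'an graphs may \emph{decrease} $\nu$ rather than leave it ``unchanged,'' and the modified graph need not be globally $K_{r+1}$-free; neither matters, since all you really use is $e(G[Q_i])\le t(|Q_i|,r)$.)

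The genuine gap is in the boundary analysis, exactly where you predicted the difficulty but did not resolve it. The only quantitative mechanism you supply is: if some vertex $v$ outside $U$ is complete to $U$, then $G[U]$ is $K_r$-free, so $e(G[U])\le t(u,r-1)$. In the complementary case your estimates degrade only to $e(G[U])\le t(u,r)$ and $\deg_U(v)\le u-1$ for each outside vertex, and this is not enough to prove the theorem. Concretely, at $u=s$ (where your excess bound forces all components to be singletons) the complementary case gives the bound $t(s,r)+(s-1)(n-s)$; for $r=2$, $s=10$, $n=21$ this equals $25+99=124$, whereas the claimed maximum is $\max\{t(21,2),\,t(10,1)+10\cdot 11\}=110$. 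So the dichotomy ``some outside vertex is complete to $U$ / none is'' cannot close the upper bound, no matter how the remaining optimization over $u$ is handled. What must be used instead is the full local constraint: for \emph{every} vertex $v$ outside $U$, the set $N_U(v)$ induces a $K_r$-free subgraph of $G[U]$, since otherwise $\{v\}\cup N_U(v)$ contains a $K_{r+1}$. Hence $\deg_U(v)$ is bounded by the largest size of a vertex subset of $G[U]$ inducing a $K_r$-free graph; for instance, if $G[U]=T(u,r)$ then $\deg_U(v)\le u-\lfloor u/r\rfloor$, far below $u-1$. The heart of the proof is a trade-off lemma between $e(G[U])$ and this quantity --- that trade-off is precisely why the truth is $t(s,r-1)+s(n-s)$ rather than $t(s,r)+(s-1)(n-s)$ --- and your sketch contains no such lemma. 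Relatedly, your closing claim that the bound, as a function of $u$, ``is extremal at an endpoint'' is asserted rather than proved; it does hold in the complete-vertex case, because the convexity of $t(2(s-u)+1,r)$ in $u$ dominates the concavity of $u(n-u)$, but that verification also needs to be written out.
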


For generalized Turán number, Wang \cite{Wang2020} use the shifted method to determine the generalized Turán number of $M_{s+1}$.

\begin{theorem}[\cite{Wang2020}]
  For any $r\geqslant 2$ and $n\geqslant 2s+1$,
  \begin{equation*}
    ex(n, K_r, M_{s+1})=\mbox{\rm{max}}\left\{\binom{2s+1}{r}, \binom{s}{r}+(n-s)\binom{s}{r-1}\right\}.
  \end{equation*}
\end{theorem}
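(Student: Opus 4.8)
The plan is to prove matching lower and upper bounds, with the common value realised by two explicit extremal graphs.

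\textbf{Lower bound.} I would first exhibit two $M_{s+1}$-free graphs attaining the two candidate values. The first is $G_1=K_{2s+1}\cup E_{n-2s-1}$, the disjoint union of a clique on $2s+1$ vertices with $n-2s-1$ isolated vertices; since $K_{2s+1}$ has matching number $s$ we get $\nu(G_1)=s$, and as the isolated vertices lie in no $K_r$ (for $r\geqslant 2$) we have $N(G_1,K_r)=\binom{2s+1}{r}$. The second is $G_2=K_s\vee E_{n-s}$; every matching edge must use a vertex of $K_s$, so $\nu(G_2)=s$, and since $E_{n-s}$ is independent each $K_r$ uses at most one vertex of $E_{n-s}$, giving $N(G_2,K_r)=\binom{s}{r}+(n-s)\binom{s}{r-1}$. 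Hence the right-hand side is a lower bound.

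\textbf{Upper bound via structure.} Let $G$ be an $n$-vertex graph with $\nu(G)\leqslant s$ maximising $N(G,K_r)$. I would apply the Gallai--Edmonds decomposition $V(G)=D\cup A\cup C$, where $D$ is the set of vertices missed by some maximum matching, $A=N(D)\setminus D$, and $C$ the remainder: the components of $G[D]$ are factor-critical (hence of odd order), $G[C]$ has a perfect matching, and the deficiency gives $c(D)-|A|=n-2s$, with $c(D)$ the number of components of $G[D]$. As distinct components of $G[D]$ are non-adjacent, every $K_r$ meets at most one of them. The reduction is to argue that, without decreasing the clique count and without increasing the matching number, one may add all edges inside $A$, all edges between $A$ and $D$, and all edges inside each $D$-component, and may dispose of $C$ (redistributing its vertices as singleton components of $D$). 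This turns $G$ into the canonical graph $K_a\vee(K_{d_1}\cup\cdots\cup K_{d_c})$ with $a=|A|$, the $d_i$ odd, $\sum_i d_i=n-a$ and $c=n-2s+a$; a short Berge--Tutte computation confirms its matching number is exactly $s$.

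\textbf{Optimisation.} It then remains to maximise
\[
N=\sum_{i=1}^{c}\binom{a+d_i}{r}-(c-1)\binom{a}{r},
\]
where the correction term removes the over-counting of the $K_r$'s lying entirely inside $A$. Since $x\mapsto\binom{a+x}{r}$ is convex, a smoothing argument shows that for fixed $a$ (hence fixed $c$) the sum is maximised by taking one part as large as possible and all others equal to $1$; the large part then has odd size $2s-2a+1$, forcing $0\leqslant a\leqslant s$. Substituting and using Pascal's rule reduces everything to maximising
\[
N(a)=\binom{2s-a+1}{r}+(n-2s+a-1)\binom{a}{r-1},\qquad 0\leqslant a\leqslant s.
\]
Each summand is convex in $a$, so $N(a)$ is convex and attains its maximum at an endpoint; and $N(0)=\binom{2s+1}{r}$ while $N(s)=\binom{s}{r}+(n-s)\binom{s}{r-1}$, recovering exactly the two constructions.

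\textbf{Main obstacle.} The delicate step is the structural reduction to the canonical graph: one must check that the edge-additions (inside $A$, between $A$ and $D$, inside each component) together with the elimination of $C$ never raise the matching number above $s$ while never losing a $K_r$. Controlling $\nu$ under these operations, and in particular disposing of the $C$-part (the case absent from both extremal examples), is where the real care lies; the alternative is to bypass Gallai--Edmonds and run Wang's compression/shifting argument, which drives $G$ toward a nested shifted graph on which the same convexity optimisation applies.
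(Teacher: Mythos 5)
Your optimisation step is sound: the clique count of $K_a\vee(K_{d_1}\cup\cdots\cup K_{d_c})$ is exactly $\sum_i\binom{a+d_i}{r}-(c-1)\binom{a}{r}$, the smoothing via convexity of $x\mapsto\binom{a+x}{r}$ preserves oddness, Pascal's rule gives $N(a)=\binom{2s-a+1}{r}+(n-2s+a-1)\binom{a}{r-1}$, and convexity on $[0,s]$ with endpoint values $N(0)=\binom{2s+1}{r}$ and $N(s)=\binom{s}{r}+(n-s)\binom{s}{r-1}$ is all correct. The genuine gap is exactly the step you flag as delicate, and the recipe you propose for it is false: ``disposing of $C$ by redistributing its vertices as singleton components of $D$'' deletes every $K_r$ having at least two vertices in $C$, so it can strictly decrease the clique count. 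Concretely, take $G=K_{2s}\cup E_{n-2s}$: every maximum matching is a perfect matching of the $K_{2s}$, so the Gallai--Edmonds decomposition is $D=E_{n-2s}$, $A=\emptyset$, $C=V(K_{2s})$, and your operation turns $G$ into the empty graph, destroying all $\binom{2s}{r}$ copies of $K_r$. It also wrecks the bookkeeping: replacing $C$ by $|C|$ singletons changes the number of components of the $D$-part from $c$ to $c+|C|$, so the produced graph has $c-|A|\neq n-2s$ (its matching number drops below $s$) and does not even lie in the family over which you then optimise.

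The repair is not hard, but it must be stated correctly: since $|C|$ is even and Gallai--Edmonds guarantees there are no $C$--$D$ edges, merge all of $C$ into a \emph{single} component $D_1$ of $G[D]$ (add all edges inside $C\cup D_1$, and from $C$ to $A$, alongside your other edge-additions); the merged component has odd order, $c_o(G-A)-|A|$ is unchanged, so by the Berge--Tutte formula $\nu$ remains $s$, and no edge is ever deleted. Cleaner still: first pass to an edge-maximal $M_{s+1}$-free supergraph of $G$ (this only adds cliques, and edge-maximality with $n\geqslant 2s+1$ forces $\nu=s$), then note that in an edge-maximal graph $C$ must be empty --- otherwise maximality would force $C$--$D$ edges, which Gallai--Edmonds forbids --- so the saturated graphs are precisely $K_a\vee(K_{d_1}\cup\cdots\cup K_{d_c})$ with all $d_i$ odd, and your convexity argument finishes the proof. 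Finally, note that the paper does not prove this statement at all; it quotes it from Wang, whose proof (as the paper describes it) runs the shifting operation $S_{ij}$, using $N(S_{ij}(G),K_r)\geqslant N(G,K_r)$ and $\nu(S_{ij}(G))\leqslant\nu(G)$. So your Gallai--Edmonds route is genuinely different from the cited one, and once the handling of $C$ is fixed it is a valid alternative.
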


In \cite{Zhang2022}, Zhang et al. determined the exact values of $ex(n, K_r, \mathcal{L}_{n, s})$.

\begin{theorem}[\cite{Zhang2022}]
  For any $r\geqslant 2$ and $n\geqslant s+1$,
  \begin{equation*}
    ex(n, K_r, \mathcal{L}_{n,s})=\mbox{\rm{max}}\left\{\binom{s}{r}, \binom{\left\lceil \frac{s+1}{2}\right\rceil}{r}+\left(n-\left\lceil \frac{s+1}{2}\right\rceil\right)\binom{\left\lfloor\frac{s-1}{2}\right\rfloor}{r-1}\right\}.
  \end{equation*}
\end{theorem}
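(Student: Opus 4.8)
The plan is to establish matching lower and upper bounds. Throughout write $a=\lceil (s+1)/2\rceil$ and $b=\lfloor (s-1)/2\rfloor$, so that $a+b=s$ and $a-b\in\{1,2\}$, and recall that a graph on $n$ vertices is $\mathcal{L}_{n,s}$-free exactly when every linear forest it contains has at most $s-1$ edges.

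For the \emph{lower bound} I would exhibit two $\mathcal{L}_{n,s}$-free graphs realizing the two terms of the maximum. The first is $G_1=K_s\cup (n-s)K_1$: it has only $s$ non-isolated vertices, so every linear forest in it has at most $s-1$ edges, while it contains $\binom{s}{r}$ copies of $K_r$. The second is $G_2=K_b\vee(K_{a-b}\cup E_{n-a})$, where $K_b$ together with $K_{a-b}$ forms a clique $K_a$ (note $K_{a-b}$ is a single vertex or a single edge) and each of the $n-a$ vertices of $E_{n-a}$ is joined exactly to the $b$ vertices of $B=V(K_b)$. Counting: copies of $K_r$ inside the $K_a$ give $\binom{a}{r}$, and each outer vertex together with an $(r-1)$-subset of its neighbourhood $B$ gives $\binom{b}{r-1}$, for a total of $\binom{a}{r}+(n-a)\binom{b}{r-1}$. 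To see $G_2$ is $\mathcal{L}_{n,s}$-free, let $F$ be any linear forest in $G_2$ and delete $B$: since every outer vertex is adjacent only to $B$, the set $E_{n-a}$ becomes isolated, so $F-B$ lives on the $a-b$ vertices of $K_{a-b}$ and has at most $a-b-1$ edges, while the deletion destroys at most $\sum_{v\in B}\deg(v)\le 2b$ edges. Hence $|E(F)|\le (a-b-1)+2b=s-1$, as required.

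For the \emph{upper bound} I would argue by induction on $n$, removing a vertex of small degree. The engine is the key lemma: \emph{if $\delta(G)\ge b+1$ and $n\ge s+1$, then $G$ contains a linear forest with $s$ edges}, so every $\mathcal{L}_{n,s}$-free graph on $n\ge s+1$ vertices has a vertex $v$ with $\deg(v)\le b$. Granting this, deleting such a $v$ leaves an $\mathcal{L}_{n-1,s}$-free graph and destroys at most $\binom{\deg(v)}{r-1}\le\binom{b}{r-1}$ copies of $K_r$; since the linear term of the maximum increases by exactly $\binom{b}{r-1}$ as $n$ grows by one, the induction closes in the range where that term dominates. In the complementary range the larger term is $\binom{s}{r}$, which happens in two ways. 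If $b<r-1$ then $\binom{b}{r-1}=0$, the linear term is the constant $\binom{a}{r}\le\binom{s}{r}$, and deleting a vertex of degree $\le b<r-1$ destroys no $K_r$ at all, giving $N(G,K_r)\le\binom{s}{r}$ at once. Otherwise $b\ge r-1$ and the linear term eventually overtakes $\binom{s}{r}$, so the clique term can dominate only on a bounded range of small $n$, which I would settle by a direct structural argument serving as the base cases.

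I would prove the key lemma by a longest-path / P\'osa-rotation argument: take a maximum linear forest $F$ and let $I$ be the set of vertices missed by its non-trivial paths; maximality forces $I$ to be independent and every vertex of $I$ to be adjacent only to \emph{internal} vertices of the paths of $F$. If $\delta\ge b+1$ this independent, internally attached set cannot be absorbed without creating a longer path or a new component, and weighing the path-endpoints against the degree lower bound produces a linear forest of size at least $s$. \textbf{The main obstacle} I anticipate is precisely this lemma together with the bookkeeping at the crossover between $G_1$ and $G_2$: one must check that the degree threshold $b+1$ is sharp (it is attained by $G_2$, whose outer vertices have degree $b$) and that the inductive deletion never overshoots the maximum, which forces the clique-dominated and small-$n$ regimes to be handled separately rather than by one uniform estimate. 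A secondary point for general $r$ is that all the relevant clique counts reduce to binomial coefficients of degrees, so convexity (equivalently the weak form of Kruskal--Katona) pins the worst case to the "clique plus dominating set" shape of $G_2$, matching the lower bound.
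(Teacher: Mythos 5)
Your lower bound is correct and complete: the two constructions are the right ones, and your verification that $K_b\vee(K_{a-b}\cup E_{n-a})$ is $\mathcal{L}_{n,s}$-free (delete $B$, losing at most $2b$ forest edges, leaving at most $a-b-1$) is clean. Your key lemma is also true: every $\mathcal{L}_{n,s}$-free graph on $n\geqslant s+1$ vertices has a vertex of degree at most $b=\lfloor (s-1)/2\rfloor$; it follows, for instance, from the Erd\H{o}s--Gallai fact that a connected graph with minimum degree $\delta$ contains a path on $\min\{2\delta+1,n\}$ vertices, applied componentwise, so your P\'osa-rotation sketch could be replaced by a citation. Note also that the paper you are compared against does not prove this statement at all: it quotes it from Zhang, Wang and Zhou, whose proof goes through the shifting method (the paper's Lemma \ref{L-freeNoChange}), so your deletion-induction route is genuinely different from the source's.

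The genuine gap is the clique-dominated regime, which you wave off as ``a bounded range of small $n$'' to be settled by ``a direct structural argument.'' That range is where essentially the whole content of the theorem sits, and it can be enormous. Take $s=10$, $r=5$: then $a=6$, $b=4$, $\binom{b}{r-1}=1$, the linear term equals $n$, and the clique term is $\binom{10}{5}=252$, so the clique term strictly dominates for every $n$ with $11\leqslant n\leqslant 251$. In this entire range your inductive step is useless: deleting a vertex of degree at most $b$ can destroy a copy of $K_r$, so the recursion yields $N(G,K_r)\leqslant \binom{s}{r}+\binom{b}{r-1}=253$, overshooting a target that does not grow with $n$; and the induction for larger $n$ cannot even start, because its base case at the crossover $n=252$ lies inside the unproven range. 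Thus for all these $n$ you must show from scratch that an $\mathcal{L}_{n,s}$-free graph has at most $\binom{s}{r}$ copies of $K_r$ --- this is not a base case but the heart of the theorem, and it is precisely what Zhang et al.'s shifting argument (or a comparable structural technique) is needed for; the closing appeal to ``convexity / Kruskal--Katona pinning the worst case'' is not an argument. As written, your proposal fully proves the theorem only when $b<r-1$ (where $\binom{b}{r-1}=0$ and deletion is free); whenever $b\geqslant r-1$, everything, including the launch of your induction, rests on the missing clique-dominated case.
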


For some integers $t\geqslant k\geqslant r$, let $\Delta_{t, k}^{r}=N(T_k(t), K_r)$. Recently, Ma and Hou \cite{Ma2023} determined the exact values of $ex(n, K_r, \{K_{k+1}, M_{s+1}\})$.

\begin{theorem}[\cite{Ma2023}]
  For $n\geqslant 2s+1$ and $k\geqslant r\geqslant3$,
  \begin{equation*}
    ex(n, K_r, \{K_{k+1}, M_{s+1}\})=\mbox{\rm{max}} \{\Delta_{2s+1, k}^r, \Delta_{s, k-1}^r+(n-s)\Delta_{s, k-1}^{r-1} \}.
  \end{equation*}
\end{theorem}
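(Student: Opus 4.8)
My plan is to prove the lower and upper bounds separately, generalizing the Tutte--Berge approach of Alon and Frankl. Note first that the statement is consistent with the cases already available: taking $r=2$ reduces it to Theorem \ref{Alon2022}, since $\Delta_{t,k}^2=t(t,k)$ and $\Delta_{s,k-1}^1=s$, while letting $k\to\infty$ (so every Tur\'an graph becomes complete) recovers Wang's theorem. For the lower bound I would exhibit two $\{K_{k+1},M_{s+1}\}$-free graphs. The first is $G_1=T_k(2s+1)\cup (n-2s-1)K_1$, which is $K_{k+1}$-free and satisfies $\nu(G_1)=s$ because its only edges lie on $2s+1$ vertices, giving $N(G_1,K_r)=\Delta_{2s+1,k}^r$. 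The second is $G_2=T_{k-1}(s)\vee E_{n-s}$, whose clique number is at most $k$ and whose matching number is $s$ because $S:=V(T_{k-1}(s))$ is a vertex cover; counting copies of $K_r$ by their intersection with $S$ yields $N(G_2,K_r)=\Delta_{s,k-1}^r+(n-s)\Delta_{s,k-1}^{r-1}$. Hence the maximum of the two quantities is a lower bound.

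For the upper bound, let $G$ be $\{K_{k+1},M_{s+1}\}$-free, so $\nu(G)\le s$. By the Berge--Tutte formula there is a set $U\subseteq V(G)$ with $|U|=u$ such that $G-U$ has components $B_1,\dots,B_t$ and $o(G-U)-u=n-2\nu(G)\ge n-2s$; from $t\ge o(G-U)\ge n-2s+u$ and $t\le n-u$ one gets $u\le s$, so $u$ interpolates between the two constructions. Because distinct components of $G-U$ are mutually non-adjacent, every copy of $K_r$ lies in $G[U]$ or meets exactly one $B_i$, so with $f_i:=N(G[U\cup B_i],K_r)-N(G[U],K_r)$ I obtain
\begin{equation*}
  N(G,K_r)=N(G[U],K_r)+\sum_{i=1}^{t}f_i .
\end{equation*}
Since $\sum_i|B_i|=n-u$ and $t\ge n-2s+u$, the sizes obey the budget $\sum_i(|B_i|-1)\le 2(s-u)$, so all but at most $2(s-u)$ of the components are singletons.

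The per-piece estimates come from the clique version of Tur\'an's theorem (Zykov's theorem): every $K_{k+1}$-free graph on $m$ vertices has at most $\Delta_{m,k}^r$ copies of $K_r$. For a singleton $B_i=\{v\}$ one has $f_i=N(G[N_U(v)],K_{r-1})$, and $G[N_U(v)]$ is $K_k$-free (otherwise $v$ completes a $K_{k+1}$), so $f_i\le\Delta_{u,k-1}^{r-1}$; likewise $N(G[U],K_r)\le\Delta_{u,k}^{r}$ and the at most $2(s-u)$ large components are controlled through $N(G[U\cup B_i],K_r)$. The essential difficulty is the coupling between the clique structure of $G[U]$ and the neighbourhoods of the singletons: a singleton attains $\Delta_{u,k-1}^{r-1}$ only when it is adjacent to all of $U$, but if many singletons are adjacent to all of $U$ then $G[U]$ cannot contain a $K_k$, which forces $N(G[U],K_r)\le\Delta_{u,k-1}^{r}$ rather than $\Delta_{u,k}^{r}$. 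Making this trade-off quantitative is the crux of the proof.

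I expect the main obstacle to be the ensuing optimization: one must show, uniformly in $u\in\{0,\dots,s\}$, that $N(G,K_r)$ is maximized at one of the two boundary configurations, namely $u=0$ with a single odd component equal to $T_k(2s+1)$ (value $\Delta_{2s+1,k}^r$), or $u=s$ with $n-s$ singletons each joined to a copy of $T_{k-1}(s)$ (value $\Delta_{s,k-1}^r+(n-s)\Delta_{s,k-1}^{r-1}$). This should follow from a smoothing argument that dissolves the $\le 2(s-u)$ non-singleton components either into one large component or into extra singletons, ruling out even components and intermediate $u$, after which Zykov's theorem reduces everything to comparing two explicit functions of $u$ and invoking their convexity/monotonicity. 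A plausible alternative is induction on $r$ with Theorem \ref{Alon2022} as the base case $r=2$, exploiting $r\,N(G,K_r)=\sum_{v}N(G[N(v)],K_{r-1})$ and the fact that each $G[N(v)]$ is $K_k$-free; the drawback is that the neighbourhoods need not inherit a clean bound on their matching number, so the Berge--Tutte route seems more direct.
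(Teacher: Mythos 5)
First, a point of reference: the paper you are reading does not prove this statement at all --- it is quoted from Ma and Hou \cite{Ma2023} as background motivation, so there is no proof in the paper to compare yours against. Judged on its own, your proposal gets the lower bound completely and correctly: both $T_k(2s+1)\cup (n-2s-1)K_1$ and $T_{k-1}(s)\vee E_{n-s}$ are $\{K_{k+1},M_{s+1}\}$-free and the clique counts are as you state. The Berge--Tutte setup for the upper bound is also sound: the existence of $U$ with $o(G-U)-|U|=n-2\nu(G)$, the deduction $u\leqslant s$, the decomposition of $K_r$-copies over $G[U]$ and the components $B_i$, the budget $\sum_i(|B_i|-1)\leqslant 2(s-u)$, and the singleton estimate $f_i=N(G[N_U(v)],K_{r-1})\leqslant \Delta_{u,k-1}^{r-1}$ via Zykov's theorem are all correct.

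However, the upper bound is not proved, and you say so yourself (``Making this trade-off quantitative is the crux of the proof''; ``I expect the main obstacle to be the ensuing optimization''). What is missing is exactly the mathematical content of the theorem. Concretely: (i) the trade-off between $N(G[U],K_r)$ and the singleton contributions is left qualitative --- if $G[U]$ contains a $K_k$ then no singleton sees all of $U$, but a singleton may still see all of $U$ except one vertex, so one cannot simply replace $\Delta_{u,k}^{r}$ by $\Delta_{u,k-1}^{r}$; what is needed is a lemma of the form ``if $H$ is $K_{k+1}$-free on $U$ plus an independent set whose vertex neighbourhoods in $U$ are $K_k$-free, then the total $K_r$-count is at most the claimed maximum,'' and that lemma is itself the hard step, not a consequence of Zykov's theorem alone. (ii) The non-singleton components are only ``controlled through $N(G[U\cup B_i],K_r)$'': the resulting bound $f_i\leqslant \Delta_{u+|B_i|,k}^{r}-N(G[U],K_r)$ subtracts a quantity you have no lower bound for, and summing it over several large components produces terms $-N(G[U],K_r)$ that you cannot absorb; the smoothing argument that would dissolve these components and rule out intermediate values of $u$ is hypothesized but never carried out, and its convexity claims are unverified. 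As it stands, the proposal is a correct skeleton with a correct lower bound, but the core of the upper bound --- precisely the part where Ma and Hou's work lies --- is absent.
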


We were inspired by Zhang and Ma's article. In this paper, we determine the Turán number of $\{K_{r+1}, \mathcal{L}_{n, s}\}$, which generalized the result of Theorem \ref{Alon2022}.

\begin{theorem}\label{MainThm}
  For $n\geqslant 2s+1$ and $r\geqslant 2$,
  \begin{equation*}
  ex(n, \left\{K_{r+1}, \mathcal{L}_{n, s}\right\})=  \mbox{\rm{max}} \left\{ t(s, r), t\left(\left\lfloor \frac{s-1}{2} \right\rfloor, r-1\right)+\left(n-\left\lfloor \frac{s-1}{2} \right\rfloor\right)\left\lfloor \frac{s-1}{2} \right\rfloor\right\}.
  \end{equation*}
\end{theorem}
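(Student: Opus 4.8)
The plan is to establish the matching lower and upper bounds separately: the lower bound by exhibiting two extremal graphs, and the upper bound by induction on $n$ driven by a structural lemma on maximum linear forests. Throughout write $m=\lfloor\frac{s-1}{2}\rfloor$, and recall that a graph is $\mathcal{L}_{n,s}$-free exactly when its maximum linear forest has at most $s-1$ edges.

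For the lower bound I would display the two candidate extremal graphs. The first is $G_1=T_r(s)\cup\overline{K_{n-s}}$, the Turán graph on $s$ vertices together with $n-s$ isolated vertices: it is $K_{r+1}$-free, every linear forest lies inside the $s$ nonisolated vertices and so has at most $s-1$ edges, hence $G_1$ is $\mathcal{L}_{n,s}$-free, and $e(G_1)=t(s,r)$. The second is $G_2=T_{r-1}(m)\vee\overline{K_{n-m}}$; its maximum clique has order $(r-1)+1=r$, so it is $K_{r+1}$-free, and since every edge meets the $m$-set $S=V(T_{r-1}(m))$ while each vertex of a linear forest has degree at most $2$, any linear forest in $G_2$ has at most $2m\le s-1$ edges, so $G_2$ is $\mathcal{L}_{n,s}$-free, with $e(G_2)=t(m,r-1)+(n-m)m$. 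These two graphs give the lower bound.

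For the upper bound, let $G$ be a $K_{r+1}$-free, $\mathcal{L}_{n,s}$-free graph on $n\ge 2s+1$ vertices with the maximum number of edges, and induct on $n$ down to the base case $n=2s+1$. The cases $s\le 2$ are immediate, so assume $s\ge 3$; then for every $n\ge 2s+1$ the second term dominates the first (at $n=2s+1$ the difference reduces to $(s-3)(s+2)\ge 0$, and the second term only grows with $n$), so it suffices to bound $e(G)$ by $t(m,r-1)+(n-m)m$. If $G$ has an isolated vertex, delete it and apply the inductive hypothesis, since both terms in the claimed maximum are nondecreasing in $n$. Otherwise $\delta(G)\ge 1$, and the degree dichotomy below gives $\delta(G)\le m$; deleting a vertex $v$ with $d(v)\le m$ yields a $K_{r+1}$-free, $\mathcal{L}_{n-1,s}$-free graph on $n-1\ge 2s+1$ vertices, so by induction $e(G)\le e(G-v)+m\le\big(t(m,r-1)+(n-1-m)m\big)+m=t(m,r-1)+(n-m)m$, as required. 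The base case $n=2s+1$, together with the first term $t(s,r)$, is supplied by Turán's theorem in the concentrated regime: a $K_{r+1}$-free graph supported on at most $s$ vertices has at most $t(s,r)$ edges, while the same maximum-linear-forest analysis used below forces the second term once more than $s$ vertices carry edges.

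The main obstacle is the degree dichotomy: \emph{if $G$ is $\mathcal{L}_{n,s}$-free with $n\ge 2s+1$ and $\delta(G)\ge 1$, then $\delta(G)\le m$.} I would prove it by a rotation/augmentation argument on a maximum linear forest $F$ with $f=|E(F)|\le s-1$, chosen among all maximum linear forests to cover as many vertices as possible. Let $U$ be the set of vertices incident to no edge of $F$; maximality forces $U$ to be independent and each neighbour of a vertex of $U$ to be an internal vertex of some path of $F$. Suppose, for contradiction, that $\delta(G)\ge m+1$. If $U=\varnothing$, then $F$ is a union of paths each with at least one edge covering all $n$ vertices, whence $f\ge\lceil n/2\rceil\ge s+1$, impossible. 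If $U\ne\varnothing$, pick $u\in U$; its at least $m+1$ neighbours are internal path-vertices, and rerouting a path through $u$ either strictly increases the number of covered vertices, contradicting the choice of $F$, or is blocked only when these neighbours are centres of length-two paths, in which case the $\ge m+1$ disjoint length-two paths already force $f\ge 2(m+1)\ge s$, again impossible. Hence $\delta(G)\le m$. The technical care lies in handling short paths and in checking that the rotations never create a cycle or raise a degree above two; once the dichotomy is in place, the induction and the Turán estimates close the argument and match the two constructions.
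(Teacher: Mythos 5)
Your strategy is genuinely different from the paper's: the paper strong-shifts the extremal graph, extracts a maximal complete multipartite core $A$, and bounds $e(A)$, $e(A,B)$, $e(B)$ by cases, whereas you induct on $n$ via a minimum-degree dichotomy proved by rotations on a maximum linear forest. The parts you actually write out are essentially sound: the two constructions are the paper's, the rotation proof of the dichotomy is correct (rerouting is blocked only when the neighbour is the centre of a two-edge path, and $m+1$ disjoint two-edge paths force $f\ge 2(m+1)\ge s$), and the deletion step $e(G)\le e(G-v)+m$ is valid. The genuine gap is the base case $n=2s+1$, which is never proved. The sentence invoking ``Turán's theorem in the concentrated regime'' plus ``the same maximum-linear-forest analysis'' is not an argument: when more than $s$ vertices carry edges, Turán's theorem gives nothing, and the dichotomy cannot be applied to the subgraph induced by the support, since its proof needs at least $2s-1$ vertices (on fewer vertices it is simply false: a triangle is $\mathcal{L}_{3,3}$-free with $\delta=2>1=m$); nor can isolated vertices be deleted at $n=2s+1$ without leaving the range of the theorem. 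Since your induction step only transports the bound upward at cost $m$ per vertex, the entire difficulty of the theorem sits exactly in this unproved base case.

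Worse, this gap cannot be filled, because the statement itself is false for even $s$ when $r\ge s/2+1$, so the same is true of your base case. Take $s=4$, $r=3$, and $G^*=K_1\vee(K_2\cup\overline{K_{n-3}})$: it is $K_4$-free (clique number $3$), and every edge except one is incident to the dominating vertex, whose degree in any linear forest is at most $2$, so the maximum linear forest of $G^*$ has $3$ edges and $G^*$ is $\mathcal{L}_{n,4}$-free; yet $e(G^*)=n$, while the claimed maximum is $\max\{t(4,3),\,t(1,2)+(n-1)\cdot 1\}=\max\{5,n-1\}=n-1$. More generally, for even $s$ and $r\ge s/2+1$, the graph $T(\lfloor\frac{s-1}{2}\rfloor,r-1)\vee(K_2\cup\overline{K_{n-\lfloor(s-1)/2\rfloor-2}})$ is $\{K_{r+1},\mathcal{L}_{n,s}\}$-free and exceeds the claimed value by exactly one --- the same ``$+1$ for even $s$'' that appears in Ning--Wang's formula for $ex(n,\mathcal{L}_{n,s})$. (Your side computation that the dominance gap at $n=2s+1$ equals $(s-3)(s+2)$ is also wrong as a formula, although dominance itself does hold for $s\ge 3$.) For comparison, the paper's own proof fails at the corresponding point: in Case 2 with $\ell\ge 3$ it adds an edge $\{x,y\}$ and asserts the graph remains $\mathcal{L}_{n,s}$-free without verifying the degree-sum hypothesis of its closure lemma, and on $G^*$ that assertion is false. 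So your plan could at best be salvaged for odd $s$ (or small $r$), or after the statement is corrected by the missing $+1$ term; as written, the base case is both missing and unprovable.
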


\section{Preliminaries}
Our proof is mainly based on the strong-shifting operation and the strong-closure technique, and we will cover their initial versions first.

Let $V(G), E(G)$ be the vertex set and edge set of the graph $G$ respectively. For $1\leqslant i<j\leqslant n$ and $e\in E(G)$, we define a \textit{shifting operation} $S_{ij}$ on $e$ as follow:
\begin{equation*}
S_{ij}(e)=\begin{cases}
            (e-\{j\})\cup \{i\}, & \mbox{if $j\in e$, $i\notin e$ and $(e-\{j\})\cup \{i\}\notin E(G)$}, \\
            e, & \mbox{otherwise}.
           \end{cases}
\end{equation*}

We define $S_{ij}(G)$ to be a shifted graph on vertex set $V(G)$ with edge set $\{S_{ij}(e):e\in E(G)\}$.

In 1981, Kelmans \cite{Kelmans1981} first introduced the shifting operation. In 1985, Akiyama and Frankl \cite{Akiyama1985} proved the property $\nu(S_{ij}(G))\leqslant \nu(G)$ and used it to give a short proof of Erdös-Gallai theorem. In 2019, Wang \cite{Wang2020} proved the property $N(S_{ij}(G), K_r)\geqslant N(G, K_r)$ and obtained the exact value of $ex(n, K_r, M_{s+1})$.
Obviously, doing shifting operation on graph $G$ will not change the number of edges of the original graph, i.e., $e(S_{ij}(G))=e(G)$.
1n 2022, Zhang \cite{Zhang2022} proved the following property and obtained the exact value of $ex(n, K_r, \mathcal{L}_{n, s})$.

\begin{lemma}[\cite{Zhang2022}]\label{L-freeNoChange}
  Suppose $G$ is a graph on vertex set $[n]$. For any $1\leqslant i<j\leqslant n$, we have $S_{ij}(G)$ is also $\mathcal{L}_{n, s}$-free if $G$ is $\mathcal{L}_{n, s}$-free.
\end{lemma}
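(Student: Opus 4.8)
The plan is to prove the contrapositive: if $G'=S_{ij}(G)$ contains a linear forest with exactly $s$ edges, then so does $G$. I would first record the standard reformulation of the property. Since deleting an edge from a linear forest leaves a linear forest, and since the spare vertices of the host graph on $[n]$ can always serve as the isolated vertices needed to reach order $n$, a graph on $[n]$ is $\mathcal{L}_{n,s}$-free if and only if it contains no linear forest subgraph with $s$ edges; equivalently, the maximum number of edges of a linear forest subgraph is at most $s-1$. Thus it suffices to show that this maximum does not increase under $S_{ij}$.

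Next I would isolate the structure of the shift. Writing $N_H(v)$ for the neighbourhood of a vertex $v$ in a graph $H$, a direct check of the definition gives three facts: (i) every edge of $E(G')\setminus E(G)$ is incident to $i$, and each such new edge $ik$ satisfies $jk\in E(G)$ and $ik\notin E(G)$; (ii) $N_{G'}(j)\subseteq N_G(j)$, so every edge of $G'$ meeting $j$ already lies in $G$; and (iii) $N_{G'}(j)\setminus\{i\}\subseteq N_G(i)\cap N_G(j)$, so for every neighbour $l\ne i$ of $j$ in $G'$ both $il$ and $jl$ lie in $E(G)$. Informally, in $G'$ the vertex $i$ dominates $j$. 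Now fix a linear forest $F'\subseteq G'$ with $s$ edges. If $F'$ uses no new edge then $F'\subseteq G$ and we are done, so I assume some new edge is used; by (i) all of them are incident to $i$, whence $1\le\deg_{F'}(i)\le 2$.

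The main tool is the transposition $\sigma$ exchanging $i$ and $j$. Relabelling the vertices of a linear forest yields an isomorphic linear forest, so $\sigma(F')$ is again a linear forest on $[n]$ with $s$ edges. Its edges lie in $G$ with one possible exception: a new edge $ik$ is sent to $jk\in E(G)$ by (i), an edge $jl$ (with $l\ne i$) at $j$ is sent to $il\in E(G)$ by (iii), the edge $ij$ (if present) and every edge avoiding $\{i,j\}$ are fixed and already lie in $G$; only an old edge $im$ with $m\ne j$ is sent to $jm$, which need not be an edge of $G$. Consequently, whenever $i$ has no old edge of this last type---in particular whenever both edges at $i$ are new, or $\deg_{F'}(i)=1$ with its edge new---the forest $\sigma(F')\subseteq G$ finishes the proof immediately.

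It remains to treat the case that $i$ carries exactly one new edge $ik$ and exactly one old edge $im$ with $m\ne j$. Here I would argue locally: set $F=F'-ik+jk$, which lies in $G$ and has $s$ edges, and which is a linear forest whenever $\deg_{F'}(j)\le 1$ and $j,k$ lie in different components of $F'-ik$; if instead $\deg_{F'}(j)=2$ or a cycle would be created, I pass to the augmented exchange $F=F'-ik-jl+jk+il$ for a suitable neighbour $l\ne i$ of $j$, which is legitimate because $jk,il\in E(G)$ by (i) and (iii). The main obstacle is precisely the verification accompanying this last step: one must check that no vertex reaches degree $3$ and that no cycle is introduced, the delicate situation being when $i$ and $j$ lie on a common path of $F'$, where one shows that at least one choice of $l$ (equivalently, a suitable reversal of the segment of that path between $k$ and $j$) produces a bona fide linear forest in $G$ with $s$ edges.
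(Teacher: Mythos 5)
The first thing to note is that the paper does not prove this lemma at all---it imports it from \cite{Zhang2022}---so your proposal is being judged against the requirement of a complete self-contained argument rather than against an in-house proof. Your setup is correct and well organized: the reformulation of $\mathcal{L}_{n,s}$-freeness as the absence of \emph{any} linear-forest subgraph with $s$ edges is valid (spare vertices of $[n]$ pad out the order), and your facts (i)--(iii) are exactly the right structural properties of the shift; in particular (iii) holds because an edge $jl$ with $l\ne i$ can survive into $S_{ij}(G)$ only if $il\in E(G)$, and no shifted image other than a surviving edge can avoid $i$. The transposition argument is also correct and cleanly disposes of every configuration except the one you isolate: $\deg_{F'}(i)=2$ with exactly one new edge $ik$ and one old edge $im$, $m\ne j$.

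The problem is that in this last configuration---which carries essentially all of the difficulty of the lemma---you assert the conclusion (``one shows that at least one choice of $l$ \dots produces a bona fide linear forest'') instead of proving it, so as written the proof is incomplete at its decisive step. The claim is, however, true, and the verification you owe is short. First observe that $jk\notin E(G')$, hence $jk\notin F'$: otherwise (iii) applied to the surviving edge $jk$ would give $ik\in E(G)$, contradicting that $ik$ is new; this is also what guarantees your exchanges really produce $s$ edges. Now let $P=\cdots m-i-k-\cdots$ be the path of $F'$ through $i$. If $j$ does not lie on $P$ and $\deg_{F'}(j)\leqslant 1$, then $F'-ik+jk$ is a linear forest in $G$. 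If $j$ does not lie on $P$ and $\deg_{F'}(j)=2$, either neighbour $l$ of $j$ works in $F'-ik-jl+jk+il$: the two deletions cut $P$ and the path through $j$ into four subpaths, and the two additions rejoin them end-to-end across distinct components, so no cycle or degree-$3$ vertex arises. If $j$ lies on $P$ on the $k$-side (note $j$ cannot be adjacent to $k$ in $F'$), take $l$ to be the neighbour of $j$ lying between $k$ and $j$; the exchange reverses the segment of $P$ from $k$ to $l$ and yields a single path, whereas the other neighbour would close a cycle. If $j$ lies on $P$ on the $m$-side, then either $j$ is an end of $P$ and $F'-ik+jk$ works, or one takes $l$ to be the neighbour of $j$ pointing away from $i$ (again the other choice closes a cycle). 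In every case $l\notin\{k,m\}$, so $il\notin F'$ and the new forest has exactly $s$ edges. Adding these verifications turns your outline into a complete and correct proof; without them, the ``main obstacle'' you name is left standing.
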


We then introduce a variant of the shifting operation, the strong-shifting operation.
Let $V(G)$, $E(G)$ be the vertex set and edge set of the graph $G$ respectively. For any $k\in[n]$, $1\leqslant i<j\leqslant n$ and $e=\{j, k\}\in E(G)$, we define a \textit{strong-shifting operation} $S'_{ij}$ on $e$ as follow:
\begin{equation*}
S'_{ij}(e)=
\begin{cases}
    (e-\{j\})\cup \{i\}, & \mbox{if $(e-\{j\})\cup \{i\}\notin E(G)$ and $k, i$ different color}, \\
    e, & \mbox{otherwise}.
\end{cases}
\end{equation*}

Let $S'_{ij}(G)$ be a strong-shifted graph on vertex set $V(G)$ with edge set $\{S'_{ij}(e):e\in E(G)\}$. Then $e(S'_{ij}(G))=e(G)$.

The following lemma is similar to Lemma \ref{L-freeNoChange}, which introduces that for two endpoints at the edges of different partitions, the graph will keep $\{K_{k+1}, \mathcal{L}_{n, s}\}$-free after doing the strong-shifting operation.

\begin{lemma}\label{KLfree}
  Suppose $G$ is a graph on vertex set $[n]$. For any $1\leqslant i<j\leqslant n$, we have $S'_{ij}(G)$ is also $\{K_{r+1}, \mathcal{L}_{n, s}\}$-free if $G$ is $\{K_{r+1}, \mathcal{L}_{n, s}\}$-free.
\end{lemma}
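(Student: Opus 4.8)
The plan is to verify the two forbidden configurations separately, since $S'_{ij}(G)$ is $\{K_{r+1},\mathcal{L}_{n,s}\}$-free precisely when it contains neither a $K_{r+1}$ nor any member of $\mathcal{L}_{n,s}$. First I would record the only structural change caused by the operation: the edge set of $S'_{ij}(G)$ is obtained from $E(G)$ by deleting certain edges $\{j,k\}$ and inserting, for each such deletion, the edge $\{i,k\}$, where by definition $\{j,k\}\in E(G)$, $\{i,k\}\notin E(G)$, and $i,k$ lie in different colour classes. Hence every edge of $S'_{ij}(G)\setminus E(G)$ is incident with $i$ and arises from an edge of $G$ incident with $j$; this single observation drives both halves of the argument. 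Note also that a shifted edge can never be $\{i,j\}$ itself, since that would require the loop $\{j,j\}$, so every inserted edge $\{i,k\}$ has $k\neq j$.

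For the clique half I would argue that the operation respects the colouring. Interpreting the colour classes as a proper $r$-colouring of $G$ -- as is natural given that the extremal graph in Theorem \ref{MainThm} is $r$-partite -- every retained edge of $G$ already joins two distinct classes, and every newly inserted edge $\{i,k\}$ joins distinct classes by the defining condition that $i$ and $k$ have different colours. Consequently $S'_{ij}(G)$ is again properly coloured with the same $r$ classes, and a properly $r$-coloured graph cannot contain a $K_{r+1}$. Thus no clique is created, and the colour restriction built into the definition of $S'_{ij}$ is exactly what is needed here.

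For the linear-forest half I would adapt the proof of Lemma \ref{L-freeNoChange}. Since $S'_{ij}$ performs only a subset of the replacements that the ordinary shift would, the lemma cannot be quoted verbatim; however, its method transfers because the inserted edges are again exactly those of the form $\{i,k\}$ with $\{j,k\}\in E(G)$. Suppose for contradiction that $S'_{ij}(G)$ contains a linear forest $L$ with $s$ edges. As $G$ is $\mathcal{L}_{n,s}$-free, $L$ must use at least one inserted edge, and every inserted edge is incident with $i$; because $L$ has maximum degree at most $2$, the vertex $i$ has at most two $L$-neighbours. I would then replace each inserted edge $\{i,x\}$ used by $L$ with the corresponding edge $\{j,x\}\in E(G)$, producing a linear forest $L'\subseteq G$ with the same number of edges, contradicting the $\mathcal{L}_{n,s}$-freeness of $G$. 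When $j$ is isolated in $L$, this rerouting merely relabels $i$ as $j$ along its path and is immediate.

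The main obstacle is the bookkeeping in this rerouting when $j$ is \emph{not} isolated in $L$: transferring edges from $i$ to $j$ may push $\deg_L(j)$ above $2$, or close a cycle if $i$ and $j$ already lie in a common path component. I would resolve this by a short case analysis on $\deg_L(j)\in\{0,1,2\}$ and on whether $i$ and $j$ share a component, rerouting along the relevant path so that the resulting $L'$ still has maximum degree at most $2$ and remains acyclic; the fact that every inserted edge $\{i,k\}$ satisfies $k\neq j$ keeps these cases finite and tractable. Carrying out this degree-and-cycle accounting carefully is the only delicate point, as the clique half and the reduction to the rerouting are routine.
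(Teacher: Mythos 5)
Your clique half coincides with the paper's argument (both rest on the same implicit reading that the colour classes form a proper colouring of $G$ with at most $r$ classes, something the paper never actually defines). The genuine gap is in the linear-forest half, and it sits exactly in the case you set aside as ``degree-and-cycle accounting'': $\deg_L(j)\geqslant 1$. The reason the full-shift argument behind Lemma \ref{L-freeNoChange} survives that case is a property you lose for $S'_{ij}$: in $S_{ij}(G)$, a retained edge $\{j,u\}$ (with $u\neq i$) can only have been retained because $\{i,u\}\in E(G)$, so every $L$-neighbour of $j$ is automatically a $G$-neighbour of $i$, and one may swap the roles of $i$ and $j$ while rerouting. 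Under the strong shift, an edge $\{j,u\}$ may be retained \emph{solely} because $i$ and $u$ have the same colour, with $\{i,u\}\notin E(G)$; then no swap is available, and pushing the inserted edges back onto $j$ can force $\deg(j)\geqslant 3$ with no escape.

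This is not repairable bookkeeping, because the statement your rerouting is meant to prove is false under the interpretation both you and the paper are using. Take $G$ on $[5]$ with $i=1$, $j=2$, and edges $\{2,3\},\{2,4\},\{2,5\}$ (a star at $j$ plus the isolated vertex $i$), with colour classes $\{2\},\{3\},\{1,4,5\}$ --- a proper $3$-colouring, so with $r=3$, $s=3$ this $G$ is $\{K_4,\mathcal{L}_{5,3}\}$-free. The strong shift $S'_{12}$ moves $\{2,3\}$ to $\{1,3\}$ (the colours of $1$ and $3$ differ) but keeps $\{2,4\}$ and $\{2,5\}$ (the vertices $1,4,5$ share a colour). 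Hence $S'_{12}(G)$ has edge set $\{\{1,3\},\{2,4\},\{2,5\}\}$, which is itself a spanning linear forest with $3$ edges, i.e.\ a member of $\mathcal{L}_{5,3}$, whereas $G$ contains none (all its edges meet $j$). Here $L$ uses one inserted edge and $\deg_L(j)=2$, exactly your deferred case, and there is simply no $3$-edge linear forest in $G$ to reroute to. Any correct treatment must therefore pin down what the colour classes are and exploit that in the linear-forest argument; note that the paper's own proof is no better off, since its claim that the strong shift changes ``a subset'' of the edges changed by the ordinary shift and hence Lemma \ref{L-freeNoChange} applies is invalid --- $S'_{ij}(G)$ is not a subgraph of $S_{ij}(G)$, and, as the example shows, partial shifts genuinely can create larger linear forests.
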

\begin{proof}
  Let $G$ be the graph without both $K_{r+1}$ and $\mathcal{L}_{n, s}$. We apply the strong-shifting operation $S_{ij}'$ to $G$ for all $i, j$ for $1\leqslant i<j\leqslant n$. Finally, we obtain a graph $S_{ij}'(G)$. By the definition of strong-shifting operation, the two endpoints of each new edge belong to different partitions. Hence, $S_{ij}'(G)$ is $K_{r+1}$-free.

  By the definition of shifting operation and strong-shifting operation, the edges that need to be changed for strong-shifting operation are a subset of shifting operation. It can be seen from Lemma \ref{L-freeNoChange} that $S_{ij}'(G)$ is $\mathcal{L}_{n, s}$-free.
\end{proof}

A \textit{complete multipartite graph} is a simple graph whose vertices can be partitioned into sets so that $\{u, v\}$ exists if and only if $u$ and $v$ belong to different sets of the partition. The \textit{Turán graph} $T(n, r)$ is the complete $r$-partite graph with $n$ vertices whose partite sets differ in size by at most one. Let $t(n, r)=|T(n, r)|$ be the Turán number. The famous Turán Theorem \cite{Turan1941} states that $ex(n, K_{r+1})= t(n, r)$.

\begin{lemma}[\cite{Turan1941}]\label{Turan}
  Among the $n$-vertex simple graphs with no $r+1$-clique, $T(n, r)$ has the maximum number of edges.
\end{lemma}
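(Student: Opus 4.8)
The plan is to prove Lemma~\ref{Turan} by Zykov's symmetrization, which fits naturally with the vertex-relabelling operations already introduced in this section. I would fix an \emph{extremal} graph $G$ on $[n]$, that is, a $K_{r+1}$-free graph attaining the maximum number of edges, and argue that among all such extremal graphs one may be chosen to be complete multipartite. The engine is a \emph{cloning} move: if $u,v$ are non-adjacent vertices, then deleting $v$ and inserting a new vertex $v'$ whose neighbourhood equals that of $u$ (with $v'$ non-adjacent to $u$) never enlarges the clique number, since any clique through $v'$ maps to a clique through $u$ of the same size, and no clique can contain the non-adjacent pair $u,v'$. Hence the cloned graph stays $K_{r+1}$-free, while its edge count changes by $\deg(u)-\deg(v)$.

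First I would show that in an extremal $G$ the relation of non-adjacency is transitive. Suppose instead there are vertices $x,y,z$ with $xy,yz\notin E(G)$ but $xz\in E(G)$. If $\deg(y)<\deg(x)$ (or symmetrically $\deg(y)<\deg(z)$), cloning $x$ onto $y$ strictly increases the edge count, contradicting extremality. Otherwise $\deg(y)\geqslant\deg(x)$ and $\deg(y)\geqslant\deg(z)$, and cloning $y$ onto both $x$ and $z$ changes the edge count by $2\deg(y)-\deg(x)-\deg(z)+1\geqslant 1$, again contradicting extremality. Therefore non-adjacency is an equivalence relation, so $G$ is a complete multipartite graph; and since a complete multipartite graph with $m$ parts contains $K_m$, the $K_{r+1}$-freeness forces at most $r$ parts.

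It then remains to maximize the number of edges over complete multipartite graphs on $n$ vertices with at most $r$ parts. I would first note that using fewer than $r$ parts is never better, since (for $n\geqslant r$) some part has at least two vertices and splitting it into two parts only adds edges; so we may assume exactly $r$ parts of sizes $n_1,\dots,n_r$ summing to $n$. A local exchange argument then finishes: if some $n_a\geqslant n_b+2$, moving one vertex from part $a$ to part $b$ changes the edge count by $n_a-n_b-1\geqslant 1>0$, so at the optimum the part sizes differ by at most one, which is exactly $T(n,r)$ with $t(n,r)$ edges.

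The main obstacle I anticipate is the careful bookkeeping in the symmetrization step, in particular verifying in the double-cloning case that the net change in edges is $2\deg(y)-\deg(x)-\deg(z)+1$ rather than $2\deg(y)-\deg(x)-\deg(z)$: the subtlety is that the edge $xz$ is counted in both $\deg(x)$ and $\deg(z)$, so it must be added back exactly once. Confirming that cloning never increases the clique number, especially that two clones of the same vertex are non-adjacent and hence never lie in a common clique, is the other point requiring care, but it reduces to the observation above.
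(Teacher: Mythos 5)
Your proposal is a correct, self-contained proof of the lemma. The cloning (Zykov symmetrization) step is handled properly: a clone of $u$ cannot lie in a common clique with $u$, so $K_{r+1}$-freeness is preserved; the single-clone case gives a net change of $\deg(x)-\deg(y)>0$; and your bookkeeping in the double-clone case is right, since the edge $xz$ is counted once in $\deg(x)$ and once in $\deg(z)$ but is only one edge, giving the net change $2\deg(y)-\deg(x)-\deg(z)+1\geqslant 1$. The conclusion that non-adjacency is an equivalence relation, hence that every extremal graph is complete multipartite with at most $r$ parts, and the final balancing exchange ($n_a-n_b-1\geqslant 1$ when $n_a\geqslant n_b+2$) are all sound.

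There is, however, nothing in the paper to compare this against: the paper states this lemma as Turán's classical theorem with a citation to Turán (1941) and gives no proof of it, using it only as a black box (to bound $e(A)$ by $t(a,\ell)$ in the main argument). So your work supplies a proof where the authors supply a reference. Your symmetrization route is one of the standard proofs and arguably the one most in the spirit of this paper, since it, like the paper's shifting and closure operations, proceeds by local edge-modifications that preserve the forbidden-subgraph property while not decreasing the edge count; Turán's original induction (delete a $K_r$, apply induction to the remaining $n-r$ vertices) would be an equally valid but structurally different alternative.
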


Next, we will introduce the closure technique and its variant, which we call the strong-closure technique.

Suppose $G$ is a graph with order $n$, $P$ is a property that applies to $G$, and $k$ is a positive integer. A property $P$ is considered to be $k$-stable, if whenever $G + uv$ has property $P$ and $d_G(u) + d_G(v) \geqslant k$, then $G$ itself also has property $P$. We can define the $k$-closure of $G$, represented by ${cl}_k(G)$, as the graph $H$ formed by repeatedly connecting non-adjacent vertices whose degree sum is at least $k$, until $d_H(u) + d_H(v) < k$ for all $uv \notin E(H)$. It is then straightforward to see that if $P$ is $k$-stable and ${cl}_k(G)$ possesses property $P$, then $G$ also has property $P$.

In \cite{Ning2020}, Ning and Wang proof that the property $\mathcal{L}_{n, s}$-free is $s$-stable.
\begin{lemma}[\cite{Ning2020}]\label{s-stable}
  Let $G$ be a graph on $n$ vertices. Suppose that $u, v\in V(G)$ with $d(u)+d(v)\geqslant s$. Then $G$ is $\mathcal{L}_{n, s}$-free if and only if $G+\{u, v\}$ is $\mathcal{L}_{n, s}$-free.
\end{lemma}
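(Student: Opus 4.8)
The plan is to prove both implications, the forward one being immediate and the substance lying in showing that adding the edge cannot create a new linear forest with $s$ edges. The first step is a reformulation: since $G$ has $n$ vertices, $G$ contains a member of $\mathcal{L}_{n,s}$ if and only if $G$ has a subgraph that is a disjoint union of paths with exactly $s$ edges. Indeed, any linear forest with more than $s$ edges can be trimmed to $s$ edges by deletions, and the leftover vertices pad the order up to $n$ as isolated vertices. Thus ``$\mathcal{L}_{n,s}$-free'' is equivalent to ``$G$ has no linear-forest subgraph with $s$ edges,'' and the direction ``$G+\{u,v\}$ free $\Rightarrow G$ free'' follows at once from $G\subseteq G+\{u,v\}$. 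So I concentrate on the converse.

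For the converse I argue by contradiction. Suppose $G$ is $\mathcal{L}_{n,s}$-free, $uv\notin E(G)$, $d_G(u)+d_G(v)\geqslant s$, yet $G+\{u,v\}$ contains a linear forest $L$ with exactly $s$ edges. Since $G$ itself has no such forest, $L$ must use the edge $uv$; let $P=x_1\cdots x_p$ be the path component of $L$ through $uv$, with $u=x_c$ and $v=x_{c+1}$. I will catalogue a list of \emph{rerouting moves}, each of which would produce from $L$ a linear forest with $s$ edges lying entirely in $G$, a contradiction, and then show that the \emph{absence} of all of them forces $d_G(u)+d_G(v)\leqslant s-1$. The moves are of two kinds. \emph{Extension}: if $u$ (or $v$) is adjacent in $G$ to an endpoint of a path of $L$ different from its own, or to an isolated vertex of $L$, delete $uv$ and add that edge. \emph{Crossing}: for a consecutive pair $y_ty_{t+1}$ on a path of $L$, if $\{u,v\}$ is joined to $\{y_t,y_{t+1}\}$ by two $G$-edges in a way that reconnects the resulting pieces into paths, delete $uv$ together with $y_ty_{t+1}$ and add those two edges. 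In each case I must verify that the result has maximum degree $2$ and is acyclic; the only delicate point is that on $P$ one of the two crossing orientations closes a cycle through the middle segment and is therefore \emph{not} a legal move, whereas on every other path both orientations are legal.

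The counting step then runs as follows. For each vertex $x_t$ of $L$ set $a_t=1$ if $ux_t\in E(G)$ and $b_t=1$ if $vx_t\in E(G)$, so that $d_G(u)+d_G(v)=\sum_t(a_t+b_t)$. Unavailability of the extension moves forces $a_t=b_t=0$ at every path-endpoint other than $x_1,x_p$ and at every isolated vertex, while unavailability of the crossing moves forbids the pattern $a_t=b_{t+1}=1$ on consecutive vertices (and, on the paths other than $P$, also $b_t=a_{t+1}=1$). A telescoping estimate of the form $\sum_t(a_t+b_{t+1})\leqslant(\text{number of edges})$ then bounds the contribution of each path by its number of edges; the crucial gain is that on $P$ the slot at the position of $uv$ contributes $a_c+b_{c+1}=0$ (as $x_c=u$ and $x_{c+1}=v$ are not self-adjacent), so $P$ contributes at most $(\text{edges of }P)-1$, while the two-sided constraint limits each other path to at most its own number of edges. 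Summing over all components yields $d_G(u)+d_G(v)\leqslant s-1$, contradicting the hypothesis. The main obstacle is exactly this bookkeeping: checking that every rerouting move preserves the linear-forest property and correctly identifying which crossing orientation is legal on which path, so that the per-component bounds telescope to precisely $s-1$ rather than $s$.
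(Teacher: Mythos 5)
You should note at the outset that the paper itself does not prove this lemma --- it is quoted from Ning and Wang \cite{Ning2020} --- so your argument has to stand on its own. Your overall strategy (delete $uv$, catalogue rerouting moves that would plant an $s$-edge linear forest inside $G$, then show that unavailability of all moves forces $d_G(u)+d_G(v)\leqslant s-1$) is the right one, but your catalogue of moves is incomplete in a way that breaks the final count.

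The gap is in the extension moves on $P$ itself. After deleting $uv$, the path $P$ splits into two half-paths $x_1\cdots x_c$ (ending at $u$) and $x_{c+1}\cdots x_p$ (starting at $v$). An edge of $G$ from $u$ to $x_p$, or from $v$ to $x_1$, joins these two halves end-to-end into a single path, so each of these is also a legal move producing an $s$-edge linear forest in $G$ (whereas $ux_1$ and $vx_p$ close cycles and rightly stay allowed). You explicitly exclude these: your extensions go only to endpoints of paths ``different from its own,'' and your counting step asserts $a_t=b_t=0$ only ``at every path-endpoint other than $x_1,x_p$.'' Consequently $a_p$ and $b_1$ are unconstrained, and they are also invisible to your telescoping sum, which runs only over the edges of $P$: the true contribution of $P$ is $\sum_{t}(a_t+b_t)=\sum_{t=1}^{p-1}(a_t+b_{t+1})+a_p+b_1\leqslant \bigl(\#E(P)-1\bigr)+a_p+b_1$, which can reach $\#E(P)+1$. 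Summing over components then only gives $d_G(u)+d_G(v)\leqslant s+1$, and no contradiction follows. A concrete failure of your claimed implication: take $n=4$, $s=3$, $L=P=x_1uvx_4$, and $E(G)=\{x_1u,\,vx_4,\,ux_4\}$. None of your moves applies (there are no other components, and the pattern $a_t=b_{t+1}=1$ never occurs on $P$), yet $d_G(u)+d_G(v)=3=s$, violating your bound $s-1$; the $s$-edge linear forest that $G$ does contain, namely $x_1ux_4v$, is found exactly by the missing move $ux_p$. Once you add the two moves $ux_p$ and $vx_1$ to the catalogue, you get $a_p=b_1=0$, the contribution of $P$ drops to $\#E(P)-1$, and your telescoping argument closes correctly to $d_G(u)+d_G(v)\leqslant s-1$.
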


For our problem, we have the following further lemma.
\begin{lemma}\label{strong-closure}
  Let $G$ be a graph on $n$ vertices. Suppose that $u, v\in V(G)$ with $u, v$ belong to different partitions and $d(u)+d(v)\geqslant s$. Then $G$ is $\{K_{r+1}, \mathcal{L}_{n, s}\}$-free if and only if $G+\{u, v\}$ is $\{K_{r+1}, \mathcal{L}_{n, s}\}$-free.
\end{lemma}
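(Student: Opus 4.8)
The plan is to treat the two forbidden substructures separately, since a graph is $\{K_{r+1}, \mathcal{L}_{n,s}\}$-free exactly when it is simultaneously $K_{r+1}$-free and $\mathcal{L}_{n,s}$-free. The backward implication (if $G+\{u,v\}$ is free, then $G$ is free) is immediate: $G$ is a subgraph of $G+\{u,v\}$, so any forbidden copy living in $G$ already lives in $G+\{u,v\}$. Hence all the real work is in the forward implication, and even there the two substructures decouple cleanly.

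For the $\mathcal{L}_{n,s}$ part I would simply invoke Lemma \ref{s-stable}: the hypothesis $d(u)+d(v)\geqslant s$ is precisely the condition under which $s$-stability guarantees that $G$ is $\mathcal{L}_{n,s}$-free if and only if $G+\{u,v\}$ is $\mathcal{L}_{n,s}$-free. No additional argument is needed here, and the partition hypothesis plays no role for this substructure.

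For the $K_{r+1}$ part I would use the partition structure supplied by the strong-shifting set-up of Lemma \ref{KLfree}. In that context $G$ is a subgraph of a complete $r$-partite graph: its vertices carry a coloring into (at most) $r$ classes, and every edge joins two distinct classes, so $G$ is $K_{r+1}$-free automatically, since any $r+1$ pairwise-adjacent vertices would require $r+1$ distinct colors. Because $u$ and $v$ lie in different classes, the new edge $\{u,v\}$ also joins two distinct classes; thus $G+\{u,v\}$ still respects the $r$-partition and stays $K_{r+1}$-free. Combining this with the previous paragraph gives the stated equivalence.

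The step that needs the most care is this $K_{r+1}$ implication, and the subtle point is that mere $K_{r+1}$-freeness of $G$ would not suffice: if the partition had more than $r$ classes, adding an edge between two classes could complete a $K_{r+1}$ (for example, with three color classes one could join the two endpoints of a triangle-free path and create a $K_3$). What rescues the argument is that the coloring produced by the strong-shifting operation uses only $r$ colors, so the graph is genuinely $r$-partite throughout. I would therefore make this $r$-partite assumption explicit and note that it is preserved inductively as closure edges are inserted, each such edge running between distinct classes.
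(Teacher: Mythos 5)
Your proposal is correct and follows essentially the same route as the paper: the backward direction is the trivial subgraph observation, the $\mathcal{L}_{n,s}$ part is exactly an application of Lemma~\ref{s-stable}, and the $K_{r+1}$ part argues that the new edge joins two distinct classes of the existing partition into at most $r$ classes, so no clique on $r+1$ vertices can arise. Your explicit remark that $K_{r+1}$-freeness alone would not suffice---one genuinely needs the at-most-$r$-partite structure inherited from the strong-shifting context, preserved as closure edges are added---is left implicit in the paper's one-line argument, so your version is, if anything, slightly more careful on that point.
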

\begin{proof}
    If $G+\{u, v\}$ is $\{K_{r+1}, \mathcal{L}_{n, s}\}$-free, then clearly $G$ is $\{K_{r+1}, \mathcal{L}_{n, s}\}$-free. Therefore we only need to verify the other direction.

    If $G$ is $\{K_{r+1}, \mathcal{L}_{n, s}\}$-free. Since the added new edge $\{u, v\}$ is between different partitions, it does not increase the number of partitions of the graph $G$, $G+\{u, v\}$ is $K_{r+1}$-free. And then by Lemma \ref{s-stable}, $G+\{u, v\}$ is $\mathcal{L}_{n, s}$-free, completing the proof.
\end{proof}

\section{Proof of the main Theorem}

\begin{proof}[Proof of Theorem \ref{MainThm}]
  For $n\geqslant 2s$, when $s$ is odd, $T(s, r)$ and $T(\frac{s-1}{2}, r-1)\vee E_{n-\frac{s-1}{2}}$ are $\{K_{r+1}, \mathcal{L}_{n, s}\}$-free graphs. Then the number of edges is
  \begin{equation}
    \mbox{\rm{max}}\left\{t(s, r), t\left(\frac{s-1}{2}, r-1\right)+\left(n-\frac{s-1}{2}\right)\frac{s-1}{2}\right\}. \label{odd}
  \end{equation}
  When $s$ is even, $T(s, r)$ and $T(\frac{s}{2}-1, r-1)\vee E_{n-\frac{s}{2}+1}$ are $\left\{K_{r+1}, \mathcal{L}_{n, s}\right\}$-free. Then the number of edges is
    \begin{equation}
    \mbox{\rm{max}}\left\{t(s, r), t\left(\frac{s}{2}-1, r-1\right)+\left(n-\frac{s}{2}+1\right)\left(\frac{s}{2}-1\right)\right\}. \label{even}
  \end{equation}
  From (\ref{odd}) and (\ref{even}), we can see
    \begin{equation*}
     ex(n, \left\{K_{r+1}, \mathcal{L}_{n, s}\right\})\geqslant \mbox{\rm{max}}\left\{ t(s, r), t\left(\left\lfloor \frac{s-1}{2} \right\rfloor, r-1\right)+\left(n-\left\lfloor \frac{s-1}{2} \right\rfloor\right)\left\lfloor \frac{s-1}{2} \right\rfloor\right\}.
  \end{equation*}

  Let $G$ be an $\{K_{r+1}, \mathcal{L}_{n, s}\}$-free graph on vertex set $[n]$ with maximum number of edges. Then we apply the strong-shifting operation $S_{ij}'$ to $G$ for all $i, j$ for $1\leqslant i<j\leqslant n$. Finally, we obtain the strong-shifted graph $G'$. By Lemma \ref{KLfree}, we have $e(G)=e(G')$ and $G'$ is also $\{K_{r+1}, \mathcal{L}_{n, s}\}$-free.

  Let $D$ be the set of all vertices in $G'$ with degree at least $\left\lceil\frac{s}{2}\right\rceil$. Since $G'$ is $K_{r+1}$-free and Lemma \ref{s-stable}, $D$ forms a complete $\ell$-partite graph, where $\ell\leqslant r$. If all vertices with degree greater than $\left\lceil\frac{s}{2}\right\rceil$ are in the same partial set, we make $D$ the empty set. Let $A$ be the set of vertices that forms a maximal $\ell$-partite graph that contains $D$ in $G'$. Since $G'$ is the graph with the maximum number of edges, and $A$ forms a maximal $\ell$-partite graph, $A$ forms an $\ell$-partite Turán graph. Denote $a=|A|$. It is easy to see that $a\leqslant s$, otherwise $G'$ contains a linear forest with $s$ edges, which contradicts the fact that $G'$ is $\mathcal{L}_{n, s}$-free.

  Let $B=V(G')-A$. For any $v\in B$, since $v$ is not in $D$, we have $d_{G'}(v)\leqslant\left\lceil\frac{s}{2}\right\rceil-1$. Let $B_1(\subseteq B)$ be the set of vertices that are not fully connected to the vertices in $A$, and $B_2(\subseteq B)$ the set of vertices that are fully connected to the vertices in $A$. Denote $b_1=|B_1|$ and $b_2=|B_2|$. Without losing generality, we can number the vertices in $A$ as $[a]$, the vertices in $B_1$ as $[b_1+a]\backslash [a]$, and the vertices in $B_2$ as $[n]\backslash [b_1+a]$, respectively.

  If there is an edge $\{u_1, u_2\}$ in $G'[B_1]$ with $a+1\leqslant u_1<u_2\leqslant b_1+a$ and there exists a vertex $u(\in A)$ such that $\{u, u_1\}\notin G'$. Obviously, $u_1$ and $u_2$ are not the same color as the vertices in $A$, otherwise it contradicts the fact that $A$ is the set that forms a maximum $\ell$-partite graph. Since $u\leqslant a<u_2$ and $G'$ is the strong-shifted graph, then $\{u, u_1\}\in G'$, a contradiction.

  If there is an edge $\{u_1, u_2\}$ in $G'[B_1, B_2]$ with $a+1\leqslant u_1\leqslant b_1+a$, $b_1+a+1\leqslant u_2\leqslant n$ and there exists a vertex $u(\in A)$ such that $\{u, u_1\}\notin G'$. We have that $u_1$ and $u_2$ are not the same color as the vertices in $A$, otherwise it contradicts the fact that $A$ is the set that forms a maximum $\ell$-partite graph. Since $u\leqslant a< b_1+a+1\leqslant u_2$ and $G'$ is the strong-shifted graph, then $\{u, u_1\}\in G'$, a contradiction.

  If there is an edge $\{u_1, u_2\}$ in $G'[B_2]$ with $b_1+a+1\leqslant u_1<u_2\leqslant n$. We will discuss into following two cases.

  \textbf{Case 1 } If $B_2$ is not an empty set. Recall that $B_2$ is the set of vertices that are fully connected to the vertices in $A$ and for any $v\in B$, we have $d_{G'}(v)\leqslant \left\lceil\frac{s}{2}\right\rceil-1$ because of $v\notin D$, $d_A(v)\leqslant a$. Since for any $v'\in B_2$, $d_{G'}(v')\geqslant a$, we have $a\leqslant \left\lceil\frac{s}{2}\right\rceil-1$.

  The following equality depends on a trick to estimate the edges outside $A$, which was presented in \cite{Ning2020}.
  \begin{align*}
    e(B)+e(A, B) & = \frac{1}{2}\sum_{v\in B}d_{B}(x)+\sum_{v\in B}d_A(x)\\
     & = \frac{1}{2}\sum_{v\in B}\left( d_{G'}(x)+d_A(x) \right).
  \end{align*}
  Thus,
  \begin{align*}
    e(G') & =e(A)+e(B)+e(A, B) \\
     & = e(A)+\frac{1}{2}\sum_{v\in B}\left( d_{G'}(x)+d_A(x) \right) \\
     & \leqslant t(a, \ell)+\frac{1}{2}\left( \left\lceil \frac{s}{2} \right\rceil-1+a \right)(n-a). \\
  \end{align*}
  Let $f(a)=t(a, \ell)+\frac{1}{2}\left( \left\lceil \frac{s}{2} \right\rceil-1+a \right)(n-a)$. Since $n\geqslant 2s$, we have
  \begin{align*}
    f(a+1)-f(a) & =t(a+1, \ell)-t(a, \ell)+\frac{1}{2}\left( n-2a-\left\lceil \frac{s}{2} \right\rceil \right) \\
    & > \frac{1}{2}\left( n-2a-\left\lceil \frac{s}{2} \right\rceil \right) \\
     & > 0,
  \end{align*}
  which implies $f(a)$ is an increasing function of $a$. Thus we get
  \begin{align}
    e(G') & \leqslant f(a) \notag \\
    & \leqslant f\left(\left\lceil \frac{s}{2} \right\rceil-1\right) \notag \\
     & = t\left(\left\lceil \frac{s}{2} \right\rceil-1, \ell\right)+ \left( \left\lceil \frac{s}{2} \right\rceil-1\right) \left(n-\left\lceil \frac{s}{2} \right\rceil+1\right) \notag \\
     & \leqslant t\left( \left\lfloor \frac{s-1}{2} \right\rfloor, r-1 \right)+  \left\lfloor \frac{s-1}{2} \right\rfloor \left(n-\left\lfloor \frac{s-1}{2} \right\rfloor\right). \label{leqslant2}
  \end{align}

  \textbf{Case 2 } If $B_2$ is an empty set, then $B=B_1$ is an independent set in $G'$. If there is no edge in $G'[A, B]$, then we have
  \begin{equation}
    e(G')= e(A)\leqslant t(a, \ell)\leqslant t(s, r). \label{leqslant1}
  \end{equation}

  If there are some edges in $G'[A, B]$ and $\ell\geqslant 3$, then the largest linear forest $\mathcal{L}$ will pass all the vertex in $A$. Suppose not, if we assume that the largest linear forest in $G'$ does not pass thought $v(\in A)$, then we can choose an edge $\{u, w\}$ in $\mathcal{L}$, where $u, w\in A$ and $u, w$ are not in the same partition as $v$, then $\mathcal{L}-\{u, w\}+\{u, v\}+\{w, v\}$ is the largest linear forest in $G'$, a contradiction to the fact that $\mathcal{L}$ is the largest linear forest in $G'$.
  Since $A$ forms a maximal Turán graph, we can further assume that the largest linear forest $\mathcal{L}$ pass the vertex $x$, where $x\in B$. Let $y$ be the vertex of $G'[A]$ such that $\{x, y\}\notin G'$. Then $G'+\{x, y\}$ is $\{K_{r+1}, \mathcal{L}_{n, s}\}$-free and $e(G'+\{x, y\})>e(G')$, a contradiction to the fact that $G'$ has maximum number of edges.

  If there are some edges in $G'[A, B]$ and $\ell=2$. For any $v\in B$, we want to give an upper bound on $d_{G'}(v)$. On one hand, since $v$ is not in $D$, we have $d_{G'}(v)\leqslant \left\lceil \frac{s}{2} \right\rceil-1$. On the other hand, since $B_1$ is the set that are not fully connected to the vertices in $A$, we have $d_{G'}(v)\leqslant a-1$. Consequently, $d_{G'}(v)\leqslant \mbox{\rm{min}}\left\{ \left\lceil \frac{s}{2} \right\rceil-1, a-1 \right\}$.

  \textbf{Subcase 2.1 } $a\leqslant \left\lceil \frac{s}{2} \right\rceil-1$.

  For any $v\in B$, it follows that $d_{G'}(v)\leqslant a-1$. Since $B=B_1$ is an independent set in $G'$, the number of edges in $G'$ can be bounded as follows:
  \begin{align*}
    e(G') & =e(A)+e(A, B) \\
     & \leqslant t(a, 2)+(n-a)(a-1).
  \end{align*}
  Let $g_1(a)=t(a, 2)+(n-a)(a-1)$. We have
  \begin{align*}
    g_1(a+1)-g_1(a) & = t(a+1, 2)-t(a, 2)+n-2a \\
     & > n-2a \\
     & >0,
  \end{align*}
  which implies that $g_1(a)$ is an increasing function of $a$. Then
  \begin{align}
    e(G') & \leqslant g_1(a) \notag \\
        & \leqslant g_1\left(\left\lfloor \frac{s-1}{2} \right\rfloor\right) \notag \\
        & = t\left(\left\lfloor \frac{s-1}{2} \right\rfloor, 2\right)+\left(n- \left\lfloor \frac{s-1}{2} \right\rfloor\right)\left(\left\lfloor \frac{s-1}{2} \right\rfloor-1\right) \notag \\
        & < t\left(\left\lfloor \frac{s-1}{2} \right\rfloor, 2\right)+\left(n- \left\lfloor \frac{s-1}{2} \right\rfloor\right)\left\lfloor \frac{s-1}{2} \right\rfloor. \label{<1}
  \end{align}

  \textbf{Subcase 2.2 } $a\geqslant \left\lceil \frac{s}{2} \right\rceil$.

  For any $v\in B$, it follows that $d_{G'}(v)\leqslant \left\lceil \frac{s}{2} \right\rceil-1$. Since $B=B_1$ is an independent set in $G'$, we have
  \begin{align*}
    e(G') & =e(A)+e(A, B) \\
     & \leqslant t(a, 2)+(n-a)\left(\left\lceil \frac{s}{2} \right\rceil-1\right).
  \end{align*}
  Let $g_2(a)=t(a, 2)+(n-a)\left(\left\lceil \frac{s}{2} \right\rceil-1\right)$ and $g_2'(a)=g_2(a+1)-g_2(a)=\left\lceil \frac{a}{2} \right\rceil-\left\lceil \frac{s}{2} \right\rceil+1$.
  We have
  \begin{equation*}
    g_2'(a+1)-g_2'(a)= \left\lceil \frac{a+1}{2} \right\rceil-\left\lceil \frac{a}{2} \right\rceil>0.
  \end{equation*}
  This implies $g_2(a)$ is convex on $\left[\left\lceil \frac{s}{2} \right\rceil , s-1\right]$.
  Thus,
  \begin{align}
    e(G') & \leqslant g_2(a) \notag \\
    & \leqslant \mbox{\rm{max}}\left\{g_2\left(\left\lceil \frac{s}{2} \right\rceil\right), g_2(s-1)\right\} \notag \\
     & = \mbox{\rm{max}}\left\{ t\left(\left\lceil \frac{s}{2} \right\rceil, 2\right)+\left(n-\left\lceil \frac{s}{2} \right\rceil\right)\left(\left\lceil \frac{s}{2} \right\rceil-1\right), t(s-1, 2)+(n-s+1)\left(\left\lceil \frac{s}{2} \right\rceil-1\right) \right\} \notag \\
     & < t\left(\left\lfloor \frac{s-1}{2} \right\rfloor, 2\right)+\left(n- \left\lfloor \frac{s-1}{2} \right\rfloor\right)\left\lfloor \frac{s-1}{2} \right\rfloor. \label{<2}
  \end{align}

  By (\ref{leqslant2}), (\ref{leqslant1}), (\ref{<1}) and (\ref{<2}), we have
  \begin{equation*}
    ex(n, \left\{K_{r+1}, \mathcal{L}_{n, s}\right\})\leqslant \mbox{\rm{max}}\left\{ t(s, r), t\left(\left\lfloor \frac{s-1}{2} \right\rfloor, r-1\right)+\left(n-\left\lfloor \frac{s-1}{2} \right\rfloor\right)\left\lfloor \frac{s-1}{2} \right\rfloor\right\}.
  \end{equation*}

  The proof is complete.
\end{proof}

\end{document}